\newcommand{\de}{\partial}
\newcommand{\ov}[1]{\bar{#1}}
\newcommand{\tr}[2]{\mathrm{tr}_{#1}{#2}}
\newcommand{\ti}[1]{\tilde{#1}}
\newcommand{\oNab}{\mkern 1mu\overline{\mkern-0.5mu \nabla\mkern-1mu}\mkern 1mu}
\renewcommand{\leq}{\leqslant}
\renewcommand{\geq}{\geqslant}
\newcommand{\N}{\mathbb{N}}
\newcommand{\Z}{\mathbb{Z}}
\newcommand{\R}{\mathbb{R}}
\newcommand{\C}{\mathbb{C}}
\newtheorem{theorem}{Theorem}[section]
\newtheorem{proposition}[theorem]{Proposition}
\numberwithin{equation}{section}
\theoremstyle{definition}
\newtheorem{rk}[theorem]{Remark}
\begin{document}

\title{Remarks on the collapsing of \\ torus fibered Calabi-Yau manifolds}
\author{Hans-Joachim Hein}
\address{Department of Mathematics, University of Maryland, College Park, MD 20742, USA}
\email{hein@umd.edu}
\author{Valentino Tosatti}
\address{Department of Mathematics, Northwestern University, Evanston, IL 60208, USA}
\email{tosatti@math.northwestern.edu}
\date{\today}
\thanks{The first-named author is supported in part by NSF grant DMS-1514709. The second-named author is supported in part by a Sloan Research Fellowship and NSF grant DMS-1308988.}
\begin{abstract}
One of the main results of the paper \cite{GTZ} by Gross-Tosatti-Zhang establishes estimates on the collapsing of Ricci-flat K\"ahler metrics on holomorphic torus fibrations. We remove a projectivity assumption from these estimates and simplify some of the underlying analysis.
\end{abstract}
\maketitle
\markboth{Hans-Joachim Hein and Valentino Tosatti}{Remarks on the collapsing of torus fibered Calabi-Yau manifolds}

\thispagestyle{empty}

\section{Introduction}\label{intro}

Let $f:M\to N$ be a surjective holomorphic map between compact K\"ahler manifolds. We assume that $M$ is Calabi-Yau, i.e.~$c_1(M)_\R =0$, and that all  smooth fibers of $f$ are complex $n$-tori, i.e.~$M_y =$ $f^{-1}(y) =\mathbb{C}^n/\Lambda_y$ for all $y\in N\setminus f(S)$, where $S$ is the set of critical points of $f$.
Fix K\"ahler metrics $\omega_M,\omega_N$ on $M,N$, put $\omega_0=f^*\omega_N$, and for all $t \in (0,1]$ let $\ti{\omega}_t$ be the unique Ricci-flat
K\"ahler metric on $M$ cohomologous to $\omega_0+t\omega_M$, whose existence is guaranteed by Yau's theorem \cite{Ya}.

The goal of this note is to prove the following estimate for $\ti{\omega}_t$:

\begin{theorem}\label{main}
Given any compact set $K\subset M\setminus f^{-1}(f(S))$ and any $k \in \N_0$, there exists a constant $C_{K,k} < \infty$, which does not depend on $t$, such that
\begin{equation}\label{estimate}
\|\ti{\omega}_t\|_{C^k(K,\omega_M)}\leq C_{K,k}
\end{equation}
holds uniformly for all $t \in (0,1]$.
\end{theorem}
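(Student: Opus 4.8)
The plan is to fix the Monge--Amp\`ere equation satisfied by $\ti{\omega}_t$ and to run the standard a priori estimates relative to a collapsing but geometrically controlled semi-flat background, working purely locally over the smooth locus so that no projectivity enters. Set $m=\dim_{\C}M$, $d=\dim_{\C}N$, so the fiber dimension is $n=m-d$. Write $\ti{\omega}_t=\omega_0+t\omega_M+\ddbar\,\vp_t$, normalized by $\sup_M\vp_t=0$. Since $c_1(M)_\R=0$ we may fix a smooth volume form $\Omega$ with $\Ric(\Omega)=0$, and then the Ricci-flat condition is
\[
(\omega_0+t\omega_M+\ddbar\,\vp_t)^m=c_t\,\Omega,\qquad c_t=\frac{\int_M(\omega_0+t\omega_M)^m}{\int_M\Omega}.
\]
Because $\omega_0=f^*\omega_N$ satisfies $\omega_0^{d+1}=0$, the binomial expansion of $(\omega_0+t\omega_M)^m$ has lowest-order term $\binom{m}{n}t^n\,\omega_0^{d}\wedge\omega_M^{n}$, so $c_t\sim a\,t^n$ with $a>0$; this records the collapse of the fiber volume. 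The first ingredient I would invoke is the uniform bound $\|\vp_t\|_{C^0(M)}\le C$, which is known for collapsing Monge--Amp\`ere equations of this shape (Kolodziej; Eyssidieux--Guedj--Zeriahi; Demailly--Pali; Tosatti).

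Second, over any small ball $U\subset N\setminus f(S)$ I would construct the semi-flat metric $\omega_{SF}$ on $f^{-1}(U)$ as in \cite{GTZ}: using the holomorphically varying period matrix $\tau(y)$ of the torus fibers, take the fiberwise flat form $\mn\,(\mathrm{Im}\,\tau)^{-1}_{a\bar b}\,dw^a\wedge d\ov{w}^b$ and correct it by base terms so that it is closed and restricts to the flat K\"ahler metric on each fiber, and set $\omega_{SF,t}=\omega_0+t\omega_{SF}$. The crucial point, verified by direct computation from the torus-bundle structure, is that although $\omega_{SF,t}$ collapses the fibers at rate $t^{1/2}$, its holomorphic bisectional curvature is bounded below uniformly in $t$ on compact subsets of $f^{-1}(U)$, and in the rescaled fiber coordinate $w/\sqrt{t}$ it has uniformly bounded geometry to all orders. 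This product-like structure is exactly what allows every constant to be taken independent of $t$, and it replaces the global algebraic input of \cite{GTZ}.

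Third comes the second-order estimate, which I expect to be the main obstacle. Writing $\ti{\omega}_t=\omega_{SF,t}+\ddbar\,\psi_t$ locally (with $\psi_t$ of bounded oscillation, since the potential of $\omega_{SF}-\omega_M$ carries a factor $t$) and setting $u:=\tr{\omega_{SF,t}}{\ti{\omega}_t}$, the Aubin--Yau computation together with $\Ric(\ti{\omega}_t)=0$ and the lower curvature bound gives
\[
\Delta_{\ti{\omega}_t}\log u\ \ge\ -\frac{C_1}{u}-C_0\,\tr{\ti{\omega}_t}{\omega_{SF,t}}.
\]
Since the construction is local I would localize with a cutoff $\chi=\rho\circ f$ depending only on the base and apply the maximum principle to $\chi\log u-A\psi_t$ for $A$ large, using $\Delta_{\ti{\omega}_t}\psi_t=m-\tr{\ti{\omega}_t}{\omega_{SF,t}}$ to absorb the curvature term, and the uniform bound on the Monge--Amp\`ere density $c_t\,\Omega/\omega_{SF,t}^m$ (itself bounded above and below because both $c_t$ and $\omega_{SF,t}^m$ scale like $t^n$) to convert, via elementary symmetric functions, the resulting bound on $\tr{\ti{\omega}_t}{\omega_{SF,t}}=\sum_i 1/\lambda_i$ at the maximum into a bound on $u=\sum_i\lambda_i$. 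The delicate part is that the gradient term $|\de\chi|^2_{\ti{\omega}_t}$ must stay bounded uniformly in $t$; this works precisely because $\chi$ varies only in the base directions, in which $\ti{\omega}_t^{-1}$ does not blow up, so the collapsing fiber directions never meet $\de\chi$. The outcome is $\ti{\omega}_t\le C\,\omega_{SF,t}$ on compact subsets.

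Finally, the upper bound together with $\prod_i\lambda_i=c_t\,\Omega/\omega_{SF,t}^m\ge c>0$ forces each eigenvalue to be bounded below, giving the two-sided equivalence $C^{-1}\omega_{SF,t}\le\ti{\omega}_t\le C\,\omega_{SF,t}$ on compact subsets. Relative to the bounded-geometry background of the second step, the equation is then uniformly elliptic, so the complex Evans--Krylov theorem and Schauder bootstrapping yield uniform $C^k$ bounds for $\ti{\omega}_t$ in the rescaled fiber coordinates, for every $k$. To reach the stated conclusion I would translate these back to the fixed metric: $\omega_{SF,t}\le C\omega_M$ already gives the case $k=0$, namely $\|\ti{\omega}_t\|_{C^0(K,\omega_M)}\le C$, while passing from the rescaled coordinates to the fixed ones only multiplies fiber derivatives by nonnegative powers of $t\le 1$, so the collapse makes fiber components and their derivatives small when measured against the fixed $\omega_M$ and every higher derivative stays bounded. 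Covering $K$ by finitely many such neighborhoods then gives \eqref{estimate}.
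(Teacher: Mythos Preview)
Your strategy is correct and is essentially that of \cite{GTZ}: build the semi-flat reference over a ball in the base, rescale fibers by $1/\sqrt t$, establish two-sided equivalence of $\ti\omega_t$ with the rescaled background, then bootstrap. The paper follows the same path through the metric equivalence, simply citing Theorem~\ref{sf} together with \cite[Lemma 4.2]{GTZ} and \cite{To}. The divergence is in the higher-order step. You use Evans--Krylov, and already in your localized $C^2$ argument you need $\ti{\omega}_t=\omega_{SF,t}+\ddbar\psi_t$ on $U$ for the barrier $\chi\log u-A\psi_t$; since $U$ is a torus bundle and not contractible, the existence of such a $\psi_t$ on all of $U$ is precisely the content of the $\ddbar$-lemma \cite[Proposition 3.1]{GTZ}, which you have silently assumed and which this paper is written to eliminate. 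Instead of potentials, the paper works with the metric $g=\lambda_t^*p^*\ti{\omega}_t$ directly: a local Calabi--Yau $C^3$ estimate in the style of \cite{SW} (maximum principle applied to $\psi^2|\nabla g|_g^2+C'\tr{\delta}{g}$, using that $g$ is Ricci-flat and $\delta$ flat) bounds $|\nabla g|$, and the remaining orders come from the quasilinear system $\Delta_g g_{i\ov\jmath}=g^{k\ov q}g^{p\ov\ell}\de_ig_{k\ov\ell}\de_{\ov\jmath}g_{p\ov q}$ via $L^p$ and Schauder theory. The paper itself notes that the $\ddbar$-lemma extends to the K\"ahler case using Section~\ref{s:sf}, so your route is ultimately valid; the $C^3$ route simply trades a familiar black box (Evans--Krylov plus the delicate $\ddbar$-lemma) for a shorter, self-contained computation that never touches a potential.
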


Theorem \ref{main} was proved in \cite{GTZ} (see Proposition 4.6 there) assuming that $M$ is projective algebraic, and our first purpose here is precisely to remove this assumption. Our second purpose is to simplify the analysis in \cite{GTZ} by applying a local Calabi-Yau type $C^3$ estimate in place of Evans-Krylov theory, so that the somewhat delicate $i\partial\bar\partial$-lemma of \cite[Proposition 3.1]{GTZ} is no longer needed.

The proofs of the first two main theorems of \cite{GTZ} (i.e.~Theorems 1.1 and 1.2 there) did not rely on the
assumption that $M$ is projective except through the proof of (\ref{estimate}). Thus, we are also removing the
projectivity assumption from those two results. Similar remarks apply to \cite[Section 5]{tsz}.

Let us now quickly sketch the contents of this note. We fix a small coordinate ball $B \subset N\setminus f(S)$ and call $U=f^{-1}(B)$. Recall that a closed real $(1,1)$-form on $U$ is called {\em semi-flat} if it restricts to a
flat K\"ahler metric on every fiber $f^{-1}(y)$ ($y\in B$). Let $p: B \times \C^n \to U$ be the universal holomorphic cover of $U$. The key result to deriving (\ref{estimate}) is the following:

\begin{theorem}\label{sf}
There exists a semi-flat form $\omega_{\rm SF}\geq 0$ on $U$ such that $p^*\omega_{\rm SF}=i\partial\bar\partial\eta$ for a smooth real-valued function $\eta$ on $B\times\mathbb{C}^n$ with the scaling property
\begin{equation}\label{scale}
\eta(y,\lambda z)=\lambda^2\eta(y,z)\;\,(\lambda \in \R).
\end{equation}
\end{theorem}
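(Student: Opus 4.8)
The plan is to reduce Theorem \ref{sf} to an explicit closed formula for $\eta$ built from a normalized period matrix of the fibration, exactly as in the elliptic case $n=1$, where one takes $\eta=(\mathrm{Im}\,z)^2/\mathrm{Im}\,\tau$. First I would use that $B$ is simply connected to trivialize the local system $R_1f_*\Z$ over $B$, identifying each $\Lambda_y$ with a fixed $\Z^{2n}$. Under this trivialization the restriction of $[\omega_M]$ to the fibers is a parallel (hence constant) section of $\wedge^2(R^1f_*\R)$, i.e. a single real alternating form $E$ on $\Z^{2n}$; it is nondegenerate and, for every $y$, a positive $(1,1)$-class for $J_y$, so the Riemann bilinear relations hold. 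Crucially $E$ need not be integral, and this is exactly the point where projectivity entered in \cite{GTZ}. I would then fix a constant real symplectic frame for $(\Lambda_\R,E)$ and, after a holomorphic $GL_n(\C)$-valued change of the fibre coordinate $z$ (which preserves $B\times\C^n$ and the radial scaling) and shrinking $B$, bring the period data into the normal form $(I\ \tau(y))$, with $\tau\colon B\to\mathcal H_n$ holomorphic into the Siegel upper half-space, so that $\tau=\tau^{\top}$ and $\mathrm{Im}\,\tau>0$. This is the classical theory of polarized period matrices, using the Riemann relations and holomorphicity of the Hodge filtration but not integrality of $E$. In these coordinates the deck group acts by $z\mapsto z+p+\tau(y)q$ with $(p,q)$ ranging over a real, possibly irrational, lattice $L\subset\R^{2n}$.

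With the normal form in hand I would simply set
\[
\eta(y,z)=\mathrm{Im}(z)^{\top}\,(\mathrm{Im}\,\tau(y))^{-1}\,\mathrm{Im}(z),
\]
a smooth real-valued function on $B\times\C^n$. The scaling property \eqref{scale} is immediate from $\mathrm{Im}(\lambda z)=\lambda\,\mathrm{Im}(z)$ for $\lambda\in\R$, and restricting to a fibre gives $\ddbar\eta=\tfrac12\mn\,(\mathrm{Im}\,\tau)^{-1}_{pq}\,dz^p\wedge d\ov z^q$, a flat Kähler metric. The key point is deck-invariance: under $z\mapsto z+p+\tau(y)q$ one computes, using $(\mathrm{Im}\,\tau)(\mathrm{Im}\,\tau)^{-1}=I$, that $\eta$ changes by $2\,\mathrm{Im}(q^{\top}z)+\mathrm{Im}(q^{\top}\tau(y)q)$, which is pluriharmonic on $B\times\C^n$ for every real $q$ (it is the imaginary part of a holomorphic function). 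Hence $\ddbar\eta$ is invariant under the full deck group regardless of whether $L$ is rational, and it descends to a closed real $(1,1)$-form $\omega_{\rm SF}$ on $U$ with $p^*\omega_{\rm SF}=\ddbar\eta$. This robustness to an irrational lattice is precisely what removes the projectivity hypothesis, and it hinges on $\tau$ being symmetric.

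It then remains to check $\omega_{\rm SF}\ge0$, i.e. that $\eta$ is plurisubharmonic. Writing $u=\mathrm{Im}(z)$ and $V=\mathrm{Im}\,\tau$, the complex Hessian of $\eta$ has fibre block $\tfrac12 V^{-1}>0$, and I would verify that its Schur complement onto the base directions vanishes identically. Differentiating $V=\tfrac1{2\mn}(\tau-\ov\tau)$ in $y$ and $\ov y$, the Schur complement evaluated on a base vector $\xi$ reduces to $\tfrac14\,w^{\top}(\ov T\,V^{-1}T-T\,V^{-1}\ov T)\,w$, with $w=V^{-1}u$ and $T=\sum_\alpha(\de_{y^\alpha}\tau)\,\xi^\alpha$; since $\tau$ is symmetric we have $T=T^{\top}$, so the two scalar terms coincide (a scalar equals its transpose) and the bracket is zero. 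Thus the Hessian is positive semidefinite of rank exactly $n$, the fibre directions are strictly positive, the horizontal directions lie in its kernel, and $\omega_{\rm SF}\ge0$.

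I expect the main obstacle to be the first step rather than the computations: one must produce the holomorphic symmetric period matrix $\tau$ with $\mathrm{Im}\,\tau>0$ directly from the Kähler, rather than Hodge-integral, polarization, checking that the classical normalization survives when $E$ is only real and that the resulting irrational lattice does no harm. Once this is secured, both the deck-invariance and the semipositivity rest on the single fact that $\tau=\tau^{\top}$, so the remainder is a short calculation; this is what lets us bypass the $i\partial\bar\partial$-lemma of \cite[Proposition 3.1]{GTZ}.
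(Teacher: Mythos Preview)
Your proposal is correct and follows essentially the same route as the paper: both trivialize the lattice over $B$, show via Gauss--Manin that the fiberwise restriction of $[\omega_M]$ is a \emph{constant} real (not necessarily integral) alternating form, pass to a real symplectic frame and normalize by a holomorphic $GL_n(\C)$-change to obtain a period map into $\mathfrak{H}_n$, define $\eta$ by the identical formula $(\mathrm{Im}\,z)^\top(\mathrm{Im}\,Z)^{-1}\,\mathrm{Im}\,z$, and check that under a real translation $z\mapsto z+p+\tau q$ the change in $\eta$ is the imaginary part of a holomorphic function. The only minor variation is in the semipositivity step: the paper uses this same translation identity for \emph{all} real $(p,q)$ (not just lattice elements) to reduce the check to the zero section $z=0$, where the mixed and base blocks vanish trivially, whereas you compute the Schur complement directly---both arguments are short and rely on the symmetry $\tau=\tau^\top$ in the same way.
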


This was proved in \cite[Section 3]{GTZ}, under the assumption that $M$ is projective, using the fact that polarized abelian varieties are classified by
the Siegel upper half-space. Our observation here, which lets us remove the projectivity condition, is that the Siegel upper half-space more generally
classifies (marked) polarized \emph{complex tori}: pairs consisting of a complex $n$-torus $T$ and a group isomorphism
$\phi:\Lambda^2\mathbb{Z}^{2n} \to$ $H^2(T,\mathbb{Z})$ such that $\phi_{\mathbb{R}}(x)$ is a K\"ahler class on $T$, where
$x\in \Lambda^2\mathbb{R}^{2n}$ is given and fixed but not necessarily rational. This is implicit in papers of Fujiki \cite[Proposition 14]{Fu} and Schumacher \cite[Theorem 4.4]{Sc}, and we will make it more explicit in the proof of Theorem \ref{sf} in Section \ref{s:sf}.

Section \ref{s:main} then completes the proof of Theorem \ref{main} along the lines indicated above.

\section{Construction of the semi-flat form}\label{s:sf}

We will show Theorem \ref{sf} by writing down an explicit formula for $\eta$ on $B \times \C^n$ and checking that
$i\partial\bar\partial\eta$ is invariant under the automorphism group of the covering $p$. In fact, the formula for $\eta$ takes the following form, from which (\ref{scale}) and the semi-flat property are clear:
\begin{equation}\label{eta}
\eta(y,z) = -\frac{1}{4}\sum_{\ell,m=1}^n ({\rm Im}\,Z(y))^{-1}_{\ell m}(z_\ell - \bar{z}_\ell)(z_m - \bar{z}_m).
\end{equation}
Here $Z$ denotes an appropriately constructed \emph{period map} from $B$ to the Siegel upper half-space $\mathfrak{H}_n$ of symmetric $n \times n$ complex matrices with positive definite imaginary parts. It remains to explain the construction of $Z$, which will be done in two steps and represents the main difference between this section and \cite[Section 3]{GTZ}, and to check translation invariance and semi-positivity of $i\partial\bar\partial\eta$.

\subsection{Construction of a polarization} Fix a basis $(v_1(y),\dots,v_{2n}(y))$ of the lattice $\Lambda_y$ that varies holomorphically with $y \in B$, and let $(\xi^1(y),\dots,\xi^{2n}(y))$ be
the $\R$-dual basis of $1$-forms on $\C^n$. Then the classes $[\xi^i(y)\wedge\xi^j(y)]\in H^2(M_y,\mathbb{Z}) \subset H^2(M_y, \R)$ with $i < j$ form a basis of $H^2(M_y,\mathbb{Z})$.

\begin{proposition}
Let $\Omega$ be a real $2$-form on $U$ whose restriction to $M_y$ is closed for all $y \in B$, and expand $[\Omega|_{M_y}] = \sum_{i<j} P_{ij}(y) [\xi^i(y)\wedge\xi^j(y)]$. If $\Omega$ is closed, then the $P_{ij}(y)$ do not depend on $y$.
\end{proposition}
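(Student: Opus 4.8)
The plan is to realize each period coefficient $P_{ij}(y)$ as the integral of $\Omega$ over an explicit $2$-cycle in the fiber $M_y$, and then to show this integral is locally constant in $y$ by Stokes' theorem, using $d\Omega=0$ on $U$.

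First I would identify the cycles dual to the basis $[\xi^i(y)\wedge\xi^j(y)]$. On the universal cover $B\times\mathbb{C}^n$, for fixed $i<j$ and $y\in B$, consider the square $(s,t)\in[0,1]^2\mapsto s\,v_i(y)+t\,v_j(y)$; since $v_i(y),v_j(y)\in\Lambda_y$, this descends to a $2$-torus $\gamma_{ij}(y)\subset M_y$. Because $(\xi^1(y),\dots,\xi^{2n}(y))$ is the $\mathbb{R}$-dual basis to $(v_1(y),\dots,v_{2n}(y))$, a direct computation of the pullback of $\xi^k(y)\wedge\xi^l(y)$ to $[0,1]^2$ gives $\int_{\gamma_{ij}(y)}\xi^k(y)\wedge\xi^l(y)=\delta^k_i\delta^l_j-\delta^k_j\delta^l_i$, which for $k<l$, $i<j$ equals $\delta^k_i\delta^l_j$. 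Integrating the expansion $[\Omega|_{M_y}]=\sum_{k<l}P_{kl}(y)[\xi^k(y)\wedge\xi^l(y)]$ over $\gamma_{ij}(y)$ then yields the clean formula $P_{ij}(y)=\int_{\gamma_{ij}(y)}\Omega$.

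Next I would let the base point move. Given a smooth path $u\mapsto y(u)$ in $B$, the holomorphic (hence continuous) dependence of the frame $(v_1(y),\dots,v_{2n}(y))$ lets me assemble the cycles into a singular $3$-chain $\Sigma_{ij}\colon[0,1]^2\times[0,1]\to U$, $(s,t,u)\mapsto[\,y(u),\,s\,v_i(y(u))+t\,v_j(y(u))\,]$. The crucial point is that its boundary is exactly $\gamma_{ij}(y(1))-\gamma_{ij}(y(0))$: the two faces $\{s=0\}$ and $\{s=1\}$ map to the same subset of $U$, because $v_i(y(u))\in\Lambda_{y(u)}$ gives $[y(u),v_i(y(u))+t\,v_j(y(u))]=[y(u),t\,v_j(y(u))]$ in $M_{y(u)}$, so they cancel with opposite orientations; likewise for $\{t=0\}$ and $\{t=1\}$. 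Stokes' theorem together with $d\Omega=0$ then gives
$$P_{ij}(y(1))-P_{ij}(y(0))=\int_{\partial\Sigma_{ij}}\Omega=\int_{\Sigma_{ij}}d\Omega=0.$$
Since $B$ is connected, this shows each $P_{ij}$ is constant on $B$.

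I expect the only real care to be needed in the middle step, namely checking that the swept-out chain $\Sigma_{ij}$ has no spurious side boundary; this is exactly where the lattice-periodicity of the frame enters, and it is bookkeeping rather than analysis. Conceptually, the statement is just that the globally defined class $[\Omega]\in H^2(U,\mathbb{R})$ restricts to a flat section of the Gauss--Manin local system $R^2f_*\mathbb{R}$, whose flat frame is precisely the integral classes $[\xi^i(y)\wedge\xi^j(y)]$ (continuous families of integral classes are locally constant); over the contractible base $B$ this forces the coefficients to be constant. The Stokes computation above is simply the explicit incarnation of this flatness.
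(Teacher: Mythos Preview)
Your argument is correct. The duality identification $P_{ij}(y)=\int_{\gamma_{ij}(y)}\Omega$ is right, and the Stokes computation goes through exactly as you describe: the lateral faces of the swept cube cancel in pairs because translation by a lattice vector is the identity on the fiber, leaving only $\gamma_{ij}(y(1))-\gamma_{ij}(y(0))$ as the boundary.

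The paper's proof is the conceptual one you sketch in your final paragraph. It invokes the Gau{ss}--Manin connection $\nabla^{\rm GM}$ on $R^2f_*\R\otimes\mathcal{C}^\infty_B$, observes that the classes $[\xi^i(y)\wedge\xi^j(y)]$ are by definition a parallel frame, and then cites the standard fact (via Cartan's magic formula, with references to Carlson--M\"uller-Stach--Peters and Voisin) that a globally closed form restricts to a Gau{ss}--Manin parallel section. So the two proofs are really the same argument at different levels of abstraction: the paper packages the transport of cycles and the Stokes step into the machinery of $\nabla^{\rm GM}$, while you unpack that machinery by hand. What your route buys is self-containment---no external references or connection formalism---at the cost of a bit more bookkeeping; what the paper's route buys is brevity and a clearer signal that this is a standard Hodge-theoretic fact rather than something special to torus bundles.
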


\begin{proof}
We use the Gau{ss}-Manin connection $\nabla^{\rm GM}$ on the smooth $\R$-vector bundle $R^2 f_* \mathbb{R}\otimes\mathcal{C}^\infty_B$. By definition, the sections $[\xi^i(y)\wedge\xi^j(y)]$
of this vector bundle form a basis of the space of $\nabla^{\rm GM}$-parallel sections. On the other hand, since $\Omega$ is closed, Cartan's magic formula yields that
$\nabla^{\rm GM}[\Omega|_{M_y}]=0$; see \cite[Corollary 4.4.4]{CMP} or \cite[Proposition 9.14]{Vo}. This immediately implies the claim.
\end{proof}

We apply this to the K\"ahler form $\Omega=\omega_M$. The only difference with the projective case considered in \cite{GTZ} is that there, we could have chosen the $P_{ij}(y)$ to be $\Z$-valued to begin with and hence \emph{trivially} independent of $y$. Now let $Q \in \R^{2n \times 2n}$ denote the unique skew-symmetric matrix with $Q_{ij} = P_{ij}$ for all $i < j$ and define, for each $y \in B$, a closed real $2$-form $\omega_y$ on the torus $M_y$ by setting
\begin{equation}
\label{pola}
\omega_y =\frac{1}{2}\sum Q_{ij} \xi^i(y) \wedge\xi^j(y).
\end{equation}
Notice that $\omega_y$ is translation-invariant and cohomologous to $\omega_M|_{M_y}$. Thus $\omega_y$ is itself a K\"ahler form
and indeed the unique flat K\"ahler form on $M_y$ cohomologous to $\omega_M|_{M_y}$.

\begin{rk}
An interesting class of complex $n$-torus bundles whose total spaces do not admit \emph{any} K\"ahler forms was introduced by Atiyah \cite{At}. The base of all of these bundles is the projective variety ${\rm SO}(2n)/{\rm U}(n)$ parametrizing linear complex structures on $\C^n$ compatible with the Euclidean metric and standard orientation. For $n = 2$ they are precisely the twistor spaces of abelian surfaces.

In fact, any choice of a polarization $x \in \Lambda^2\R^{2n}$ as at the end of Section \ref{intro} defines an embedding $F_x$ of the Stein manifold $\mathfrak{H}_n$ into the moduli space $\mathfrak{M}_n = {\rm GL}(2n,\R)/{\rm GL}(n,\C)$ of {all} complex $n$-tori, and for each $T \in F_x(\mathfrak{H}_n)$ there exists a natural subvariety $X_{T,x} \subset \mathfrak{M}_n$,   isomorphic to ${\rm SO}(2n)/{\rm U}(n)$ and intersecting $F_x(\mathfrak{H}_n)$ transversely in $T$, which serves as the base of an Atiyah bundle.
\end{rk}

\subsection{Construction of the period map}
From here on, we can follow the arguments in \cite[p.~371]{He}\footnote{The discussion in \cite{He} involves a certain $2$-form $\omega$ on $U$ (not assumed to be $(1,1)$ or closed) but in fact only depends on the family of fiberwise $2$-forms $\{\omega|_{M_y}\}_{y \in B}$. In our case this family is defined by (\ref{pola}).} to construct a holomorphic period map $Z: B \to \mathfrak{H}_n$. We will give some more details than in \cite{He}.

We begin by setting $T = (v_1, \dots, v_{2n}) \in \mathcal{O}(B,\C^{n \times 2n})$. Also, we choose $S \in {\rm GL}(2n,\R)$ such that $S^{\rm tr}Q S = (\begin{smallmatrix} 0 & 1 \\ -1 & 0\end{smallmatrix})$ in terms of the canonical decomposition $\R^{2n} = \R^n \oplus \R^n$ (here and in the following we denote by $A^{\rm tr}$ the transpose of a square matrix $A$). Observe that $S$ is unique only up to right
multiplication by Sp$(2n,\R)$ and that the period map $Z$ to be defined momentarily will depend on this choice, but only up to the action of a holomorphic isometry of $\mathfrak{H}_n$. In the course of the proof of Proposition \ref{siegel}, we will see that the first $n$ columns of $TS$ are $\C$-linearly independent. We can therefore write $TS = R(1, Z)$ with $R \in \mathcal{O}(B,{\rm GL}(n,\C))$ and $Z \in \mathcal{O}(B,\C^{n \times n})$.

\begin{rk}\label{basis}
We have an additional freedom of choosing a lattice basis, i.e.~of replacing $T$ by $TA$ for some constant matrix $A \in {\rm GL}(2n,\Z)$. Then $Q$ changes to $A^{\rm tr}QA$, but $S$ becomes $A^{-1}S$ so that the period map $Z$ remains unchanged. We will not make use of this freedom here, but if $Q$ is integral as in \cite{GTZ}, then we could arrange in this way that
$Q = (\begin{smallmatrix} 0 & \Delta \\ -\Delta & 0\end{smallmatrix})$, where $\Delta = {\rm diag}(d_1, \dots, d_n)$ for some positive integers $d_1|d_2|\dots|d_n$; compare \cite[p.~304, Lemma]{GH}.
\end{rk}

We now work at a given point $y \in B$ and for simplicity write $\omega = \omega_y$ and $Z = Z(y)$.
Recall that, even though this was not immediate from (\ref{pola}), $\omega = \omega_y$ is indeed a K\"ahler form on $M_y$.

\begin{proposition}\label{siegel}
{\rm (a)} It holds that $Z \in \mathfrak{H}_n$. In fact, this is equivalent to $\omega$ being positive $(1,1)$.

{\rm (b)} Moreover, $\omega = i \sum H_{\ell m} dz_\ell \wedge d\bar{z}_m$, where $H^{-1} = 2\bar{R}({\rm Im}\,Z)R^{\rm tr} = i\bar{T}Q^{-1}T^{\rm tr}$.
\end{proposition}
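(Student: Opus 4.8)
The plan is to express the real basis $1$-forms $\xi^i$ in terms of $dz,d\bar z$ and then read off both the type and the positivity of $\omega$ directly from the period data. First I would assemble $T$ and its conjugate into the single matrix $W=\left(\begin{smallmatrix} T\\ \bar T\end{smallmatrix}\right)\in{\rm GL}(2n,\C)$, which is invertible because $T$ realizes the $\R$-linear isomorphism $\R^{2n}\to\C^n$ sending the standard basis to $(v_1,\dots,v_{2n})$. In the real ``lattice coordinates'' $u$ dual to the $\xi^i$ one has $z=Tu$, hence $\left(\begin{smallmatrix} z\\ \bar z\end{smallmatrix}\right)=Wu$ and $du=W^{-1}\left(\begin{smallmatrix} dz\\ d\bar z\end{smallmatrix}\right)$. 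Writing $W^{-1}=(\Pi_1\mid\bar\Pi_1)$ exhibits $\Pi_1\,dz$ as the $(1,0)$-part of the vector of $1$-forms $\xi=du$, and substituting into $\omega=\tfrac12\sum Q_{ij}\xi^i\wedge\xi^j=\tfrac12\,du^{\rm tr}Q\,du$ (wedge products understood) shows that the $(2,0)$-part of $\omega$ is controlled by the antisymmetric matrix $\Pi_1^{\rm tr}Q\,\Pi_1$ and the $(1,1)$-part by $\Pi_1^{\rm tr}Q\,\bar\Pi_1$, the $(0,2)$-part being the conjugate of the $(2,0)$-part.

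Next I would feed in the symplectic normalization and the splitting $TS=R(1,Z)$, which gives $W^{-1}=S\left(\begin{smallmatrix}1 & Z\\ 1 & \bar Z\end{smallmatrix}\right)^{-1}{\rm diag}(R^{-1},\bar R^{-1})$. Using $S^{\rm tr}QS=J$, hence $S^{-1}Q^{-1}(S^{\rm tr})^{-1}=J^{-1}=-J$, together with the elementary identities $(1,Z)\,J\,(1,Z)^{\rm tr}=Z^{\rm tr}-Z$ and $(1,\bar Z)\,J\,(1,Z)^{\rm tr}=Z^{\rm tr}-\bar Z$, a short computation yields the two Riemann-type relations
\begin{equation*}
TQ^{-1}T^{\rm tr}=-R\,(Z^{\rm tr}-Z)\,R^{\rm tr},\qquad i\,\bar T Q^{-1}T^{\rm tr}=2\,\bar R\,({\rm Im}\,Z)\,R^{\rm tr}\ \ (\text{the latter once } Z=Z^{\rm tr}).
\end{equation*}
Carrying the first step through, the $(2,0)$-part of $\omega$ is a nonzero congruence multiple of $Z^{\rm tr}-Z$, equivalently it is governed by $TQ^{-1}T^{\rm tr}$; hence $\omega$ is of type $(1,1)$ if and only if $Z$ is symmetric. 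Once this holds, pushing the same substitution through the $(1,1)$-part gives $\omega=i\sum H_{\ell m}\,dz_\ell\wedge d\bar z_m$ with $H^{-1}=i\,\bar T Q^{-1}T^{\rm tr}=2\bar R({\rm Im}\,Z)R^{\rm tr}$, which is precisely statement (b); here $H^{-1}$ is Hermitian because ${\rm Im}\,Z$ is real symmetric. Since $R$ is invertible, this is a $*$-congruence of ${\rm Im}\,Z$ by $R^{\rm tr}$, so $H>0$ is equivalent to ${\rm Im}\,Z>0$. Thus $\omega$ is a positive $(1,1)$-form if and only if $Z\in\mathfrak H_n$, and because $\omega=\omega_y$ is K\"ahler this proves $Z\in\mathfrak H_n$.

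The one genuinely non-formal point, which must be settled before the splitting $TS=R(1,Z)$ even makes sense, is that the first $n$ columns of $TS$ are $\C$-linearly independent, i.e.\ that $R\in{\rm GL}(n,\C)$. These columns span a real $n$-plane $L\subset\C^n$ which is Lagrangian for $Q$, since $S$ is symplectic and $J$ vanishes on the first $\R^n$-factor. I expect this to be the main obstacle, and I would resolve it using positivity: the alternating form $E$ underlying a K\"ahler $\omega$ satisfies $E(v,iv)>0$ for every $v\ne0$, whereas $E$ vanishes identically on $L\times L$. A nonzero $v\in L\cap iL$ would be of the form $v=iu$ with $u\in L$, forcing $E(u,v)=E(u,iu)>0$ and contradicting $E|_{L\times L}=0$; therefore $L\cap iL=0$, the plane $L$ is totally real, and its $n$ spanning vectors are $\C$-linearly independent. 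This is exactly the place where the K\"ahler hypothesis enters to make $Z$ well defined, and it requires the most care; the remaining steps are bookkeeping with the block matrices above.
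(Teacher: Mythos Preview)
Your proposal is correct and complete. The totally-real argument for the $\C$-linear independence of the first $n$ columns of $TS$ is exactly the paper's (they phrase it as $\omega(v,iv)=0$ for $v\in V\cap iV$, which is your Lagrangian observation), so the ``non-formal point'' is handled identically.

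The organization of the main linear-algebra computation is genuinely different, however. The paper first pulls back by $R$ to reduce to the lattice $(1,Z)$, introduces the $\R$-dual basis $\eta^1,\dots,\eta^{2n}$ to the columns of $(1,Z)$, writes $\eta^k=\sum A_{k\ell}\,dz_\ell+B_{k\ell}\,d\bar z_\ell$ and $\eta^{n+k}=\sum\tilde A_{k\ell}\,dz_\ell+\tilde B_{k\ell}\,d\bar z_\ell$, solves the four equations $A+B=1$, $AZ+B\bar Z=0$, $\tilde A+\tilde B=0$, $\tilde A Z+\tilde B\bar Z=1$, and then reads off that $(A^{\rm tr}\tilde A)^{\rm skew}=0$ is equivalent to $Z=Z^{\rm tr}$ and that the $(1,1)$-coefficient is $\tfrac12({\rm Im}\,Z)^{-1}$. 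You instead package everything into the $2n\times 2n$ matrix $W=\left(\begin{smallmatrix}T\\\bar T\end{smallmatrix}\right)$ and express the type decomposition through the Riemann bilinear relations $TQ^{-1}T^{\rm tr}=-R(Z^{\rm tr}-Z)R^{\rm tr}$ and $i\,\bar TQ^{-1}T^{\rm tr}=2\bar R({\rm Im}\,Z)R^{\rm tr}$. This is the classical abelian-variety formulation and has the advantage of producing the invariant expression $H^{-1}=i\,\bar TQ^{-1}T^{\rm tr}$ in part~(b) directly, without ever normalizing by $R$; the paper's route, by contrast, makes the intermediate quantities $A,B,\tilde A,\tilde B$ completely explicit, which is slightly more elementary to verify by hand. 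One small point: strictly speaking your $(2,0)$-coefficient $\Pi_1^{\rm tr}Q\,\Pi_1$ is a congruence of $\bar Z-\bar Z^{\rm tr}$ rather than $Z^{\rm tr}-Z$, but of course these vanish together, so the conclusion is unaffected.
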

\begin{proof}
We view the columns of $TS$ as an $\R$-basis of $\C^n$ and denote the $\R$-dual basis by $\zeta^1$, \dots, $\zeta^{2n}$.
Then $\xi^i = \sum S_{ij}\zeta^j$ and hence $\omega = \sum_{k=1}^n \zeta^k \wedge \zeta^{n+k}$. As an aside, we can now deduce that the first $n$ columns of $TS$ are $\C$-linearly independent, thus justifying the construction of $R$ and $Z$. Indeed, let $V$ denote their $\R$-span in $\C^n$. Then $\omega(v,iv) = 0$ for all $v \in V \cap iV$ by the
preceding formula; since $\omega$ is positive $(1,1)$, this means that the $\C$-subspace $V \cap iV$ is trivial, as desired.

The matrix $R$ defines a
$\C$-isomorphism $R: \C^n \to \C^n$, so $\omega$ is positive $(1,1)$ if and only if $R^*\omega$ is.
But $R^*\omega = \sum_{k=1}^n \eta^k \wedge \eta^{n+k}$ for the basis of $1$-forms $\eta^1, \dots, \eta^{2n}$ that is
$\R$-dual to the basis of column vectors of $(1,Z)$. To understand the condition for this form to be positive $(1,1)$, we define matrices $A,B,\tilde{A}, \tilde{B} \in \C^{n \times n}$ by
$\eta^k = \sum A_{k\ell} dz_\ell + B_{k\ell}d\bar{z}_\ell$ and $\eta^{n+k} = \sum \tilde{A}_{k\ell}dz_\ell + \tilde{B}_{k\ell}d\bar{z}_\ell$ for
$k,\ell \in \{1,\dots,n\}$. Denoting the skew-symmetric part of a matrix by a superscript ``skew'', we then have that
$$R^*\omega =\sum (A^{\rm tr}\tilde{A})^{\rm skew}_{\ell m} dz_{\ell} \wedge dz_m + (A^{\rm tr}\tilde{B} - \tilde{A}^{\rm tr}B)_{\ell m}dz_\ell \wedge d\bar{z}_m +  (B^{\rm tr}\tilde{B})^{\rm skew}_{\ell m}d\bar{z}_\ell \wedge d\bar{z}_m,$$
so $R^*\omega$ is positive $(1,1)$ if and only if $(A^{\rm tr}\tilde{A})^{\rm skew} =  (B^{\rm tr}\tilde{B})^{\rm skew} = 0$ and
$\frac{1}{i}(A^{\rm tr}\tilde{B} - \tilde{A}^{\rm tr}B)$ is Hermitian positive definite. In order to rephrase this in terms of $Z$, observe that we have
$$A + B = 1, \;\; AZ + B\bar{Z} = 0, \;\; \tilde{A} + \tilde{B} = 0, \;\;\tilde{A}Z + \tilde{B}\bar{Z} = 1.$$
This yields that $\tilde{A}(Z - \bar{Z}) = 1$ (so, in particular, both factors are invertible) and $A = -\bar{Z}\tilde{A}$. Given this, $(A^{\rm tr}\tilde{A})^{\rm skew} = 0$ is clearly equivalent to $Z$ being symmetric, and then $\frac{1}{i}(A^{\rm tr}\tilde{B} - \tilde{A}^{\rm tr}B) = i \tilde{A}^{\rm tr} = \frac{1}{2}({\rm Im}\,Z)^{-1} > 0$. Thus, $Z \in \mathfrak{H}_n$. Also, $R^*\omega = \frac{i}{2}\sum ({\rm Im}\,Z)^{-1}_{\ell m} dz_{\ell} \wedge d\bar{z}_m$, which implies (b).
\end{proof}

\subsection{Comparison with \cite{GTZ}} Let us assume for the moment that $Q$ is integral as in \cite{GTZ}. We wish to compare our formalism, specialized to this case, with the treatment in \cite{GTZ}.

We can assume without loss that $Q$ is as in Remark \ref{basis}. Also, let us write $\Delta = \Sigma^2$, where $\Sigma > 0$.
Then we choose $S = {\rm diag}(\Sigma^{-1}, \Sigma^{-1})$, so that $T = (R\Sigma, RZ\Sigma) $. Finally, we apply the automorphism $R^{-1}$ to simplify the picture, mapping $T$ to
$(\Sigma, Z\Sigma)$ and $\omega$ to $\frac{i}{2}\sum ({\rm Im}\,Z)^{-1}_{\ell m} dz_{\ell} \wedge d\bar{z}_m$.

On the other hand, \cite[p.~528]{GTZ} states that we can assume that
$T = (\Delta, Z_0)$ with $Z_0 \in \mathfrak{H}_n$ and that $\omega = \frac{i}{2}\sum ({\rm Im}\,Z_0)^{-1}_{\ell m} dz_{\ell} \wedge d\bar{z}_m$.
More precisely, this means that, given the original $T = (v_1, \dots, v_{2n})$, there exists $R_0 \in {\rm GL}(n,\C)$
such that $T = R_0(\Delta, Z_0)$ and such that $R_0^*\omega$ equals the above. We then recover the previous picture by setting
$R = R_0\Sigma$ and $Z = \Sigma^{-1}Z_0\Sigma^{-1}$.

\subsection{Translation invariance and semi-positivity} We now return to the general case. Again we apply
$R^{-1}$ to simplify matters, so that now $T = (1,Z)S^{-1}$ and $\omega = \frac{i}{2}\sum ({\rm Im}\,Z)^{-1}_{\ell m} dz_{\ell} \wedge d\bar{z}_m$.

It remains to check that, for $\eta$ as in (\ref{eta}), the form $i\partial\bar\partial\eta$ is nonnegative and invariant under the deck transformations of the covering $p: B \times \C^n \to U$. (Of course the resulting semi-flat form on $U$ will then satisfy $\omega_{\rm SF}|_{M_y} = \omega_y$ for all $y \in B$, but we do not need this for Theorem \ref{sf}.)

As in \cite{GTZ}, the key is to prove that $i\partial\bar\partial\eta$ is invariant under translation by \emph{arbitrary} flat sections of the Gau{ss}-Manin connection on $R^1 f_*\R \otimes \mathcal{C}^\infty_B$. Concretely, we must show that if $T = (v_1, \dots, v_{2n})$, then for all functions $\sigma: B \to \C^n$ of the form $\sigma(y) = \sum \lambda_i v_i(y)$ with constants $\lambda_i \in \R$, the difference
$(\eta \circ T_\sigma )- \eta$ is pluriharmonic on $B \times \C^n$, where $T_\sigma(y,z) = (y, z + \sigma(y))$. Indeed, once we have this, then specializing to $\lambda_i \in \Z$ yields the desired invariance of $i\partial\bar\partial\eta$ under the deck group; moreover, it then suffices to check that $i\partial\bar\partial\eta \geq 0$ \emph{at the zero section} $z = 0$, which is clear from (\ref{eta}) (the vertical components are given by $\omega$, and the horizontal and mixed ones vanish; compare \cite[p.~529]{GTZ}).

The proof of the required translation property is similar to the one in \cite[p.~529]{GTZ}. Indeed, writing $S^{-1} = (\begin{smallmatrix} A & B \\ C & D\end{smallmatrix})$ with $A,B,C,D \in \R^{n \times n}$, a straightforward computation shows that
\begin{align*}
\begin{split}
(\eta \circ T_\sigma) - \eta =  \sum_{j,\ell=1}^n (\lambda_j C_{\ell j} + \lambda_{n+j} D_{\ell j})(2{\rm Im}\, z_\ell + \sum_{k = 1}^n (\lambda_k (({\rm Im}\,Z)C)_{\ell k} + \lambda_{n+k} (({\rm Im}\,Z)D)_{\ell k})),
\end{split}
\end{align*}
which is obviously pluriharmonic. This completes the proof of Theorem \ref{sf}.

\begin{rk}
In fact, we have proved a more precise result: given any K\"ahler form $\omega_M$ on $M$, there exists a semi-flat form $\omega_{\rm SF} \geq 0$ on $U$ as in Theorem \ref{sf} such that $\omega_{\rm SF}|_{M_y}$ is cohomologous to $\omega_M|_{M_y}$ for all $y \in B$. But this is not required in any of the following. In particular, unlike in \cite{GTZ}, there will be no need to construct $\omega_{\rm SF}$ using the same $\omega_{M}$ that was used to construct $\tilde{\omega}_t$.
\end{rk}

\section{Main estimates}\label{s:main}
The proof of Theorem \ref{main} given in \cite{GTZ}, under the assumption that $M$ is projective, has two parts: (a) the $C^0$ estimate of $\tilde{\omega}_t$, and (b) the $C^k$ estimate of $\ti{\omega}_t$ for $k \geq 1$. The projectivity of $M$ was used in the course of the construction of $\omega_{\rm SF}$ in (a), and in (b) through the proof of a certain $i\partial\bar\partial$-lemma for abelian fibrations \cite[Proposition 3.1]{GTZ} analogous to \cite[Lemma 4.3]{GW} and \cite[Proposition 3.7]{He}.

Thanks to Theorem \ref{sf}, (a) now goes through verbatim without projectivity.
Using the work in Section \ref{s:sf}, the $i\partial\bar\partial$-lemma could easily be extended to the general K\"ahler setting as well, and then the rest of (b) would go through without changes. However, we will give a simpler and more robust argument for (b) here by working at the level of metrics rather than potentials, employing a local version of Yau's
$C^3$ estimate. This eliminates the rather difficult $i\partial\bar\partial$-lemma from the proof.

Throughout this section, we will assume that the compact set $K$ of Theorem \ref{main} is so small that it can be identified with one of its preimages under the universal covering map $p: B \times \C^n \to U$ for some sufficiently small coordinate ball $B \subset N \setminus f(S)$, where $U = f^{-1}(B)$.

\subsection{The $C^0$ estimate of $\tilde{\omega}_t$} Let $\omega_{\rm SF} \geq 0$ be the semi-flat form on $U$ constructed in Theorem \ref{sf}, so
that $\omega_0+\omega_{\rm SF}$ is a semi-flat K\"ahler form on $U$.
As in \cite{GTZ} define
$\lambda_t:B\times\mathbb{C}^n\to B\times\mathbb{C}^n$ by
$$\lambda_t(y,z)=\left(y,\frac{z}{\sqrt{t}}\right).$$
Given Theorem \ref{sf}, the same proof as in \cite[Lemma 4.2]{GTZ} (which relies on some estimates from \cite{To}) shows that there exists a constant $C_B$ such that
\begin{equation*}
C_B^{-1}p^*(\omega_0+\omega_{\rm SF})\leq \lambda_t^*p^*\ti{\omega}_t\leq C_B p^*(\omega_0+\omega_{\rm SF})
\end{equation*}
holds on $B\times\mathbb{C}^n$, uniformly in $t \in (0,1]$. This trivially implies that for each compact set $K \subset B \times \C^n$ there exists a constant $C_K$ such that, with $\delta$ denoting the Euclidean metric on $B\times\mathbb{C}^n$,
\begin{equation}\label{metric2}
C^{-1}_{K}\delta\leq \lambda_t^*p^*\ti{\omega}_t\leq C_{K}\delta.
\end{equation}
In fact, $C_K$ only depends on $B \subset N \setminus f(S)$ if we take $K$ to lie in a fixed fundamental domain of the deck group action, which we can. Note that (\ref{metric2}) implies (\ref{estimate}) for $k = 0$ because $t \leq 1$.

\subsection{The $C^1$ estimate of $\tilde{\omega}_t$}
The following is a simple modification of an argument of
Sherman-Weinkove \cite{SW} for the K\"ahler-Ricci flow. For simplicity of notation, we denote by $g$ the Riemannian metric associated with the K\"ahler form $\lambda_t^*p^*\ti{\omega}_t$, but of course $g$ still depends on $t$.

\begin{proposition}\label{calabi}
Given any compact set $K\subset B\times\mathbb{C}^n$, there exists a constant $C_K$ such that
\begin{equation}\label{great}
\sup\nolimits_K |\nabla g|^2 \leq C_K
\end{equation}
holds uniformly for all $t \in (0,1]$, where the connection and norm are the ones associated with $\delta$.
\end{proposition}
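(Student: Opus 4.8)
The plan is to prove \eqref{great} by a Calabi-type third-order maximum principle, i.e.\ the local version of Yau's $C^3$ estimate alluded to in the introduction, applied to the rescaled metric $g=\lambda_t^*p^*\ti\omega_t$. The first point is that $g$ is Ricci-flat: $p$ is a local biholomorphism and $\lambda_t$ is a biholomorphism of $B\times\C^n$, so $g$ inherits Ricci-flatness from $\ti\omega_t$. In particular $\log\det(g_{i\bar j})$ is pluriharmonic on $B\times\C^n$, and by \eqref{metric2} it is bounded, uniformly in $t$, on any compact set; hence interior estimates for harmonic functions bound all of its derivatives on slightly smaller compact sets, uniformly in $t$. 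I will then set $\Psi^k_{ij}=g^{k\bar l}\partial_i g_{j\bar l}$, the difference of the Chern connections of $g$ and of the flat metric $\delta$, and work with Calabi's quantity $S=|\Psi|_g^2$. By \eqref{metric2} this is uniformly comparable to the quantity $|\nabla g|^2$ of \eqref{great} (both are positive quadratic forms in the first derivatives $\partial_i g_{j\bar l}$, compared via the ellipticity of \eqref{metric2}), so it suffices to bound $S$ on $K$ uniformly in $t$.

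The argument rests on two differential inequalities whose constants depend only on the constant $C_K$ of \eqref{metric2} and on the uniformly controlled derivatives of $\log\det g$, and are therefore uniform in $t$. First, the local Calabi/Yau $C^3$ computation yields
\[
\Delta_g S \ge c_0\,|\oNab\Psi|_g^2 - C(1+S),
\]
where $\oNab$ is the Chern connection of $g$; here the background-curvature terms drop out because $\delta$ is flat, and the remaining source terms are controlled by the derivatives of $\log\det g$. Second, inserting the Chern curvature identity $\partial_k\partial_{\bar l}g_{i\bar j}=-R_{k\bar l i\bar j}+g^{p\bar q}\partial_k g_{i\bar q}\partial_{\bar l}g_{p\bar j}$ and using Ricci-flatness $g^{k\bar l}R_{k\bar l i\bar j}=0$ gives
\[
\Delta_g(\mathrm{tr}_\delta g)=\sum_i g^{k\bar l}g^{p\bar q}\,\partial_k g_{i\bar q}\,\partial_{\bar l}g_{p\bar i}\ge c_1 S,
\]
the last comparison again using \eqref{metric2}. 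Crucially, $\mathrm{tr}_\delta g$ is itself bounded, uniformly in $t$, by \eqref{metric2}.

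With these in hand I would localize. Fix a compact set with $K\subset\mathrm{int}(K')\subset K'\subset\subset B\times\C^n$ and a cutoff $\rho$ with $\rho\equiv1$ on $K$, $\mathrm{supp}\,\rho\subset K'$, and $|\nabla\rho|^2\le C\rho$, and consider $G=\rho S+A\,\mathrm{tr}_\delta g$ for a large constant $A$. Since $g$ is parallel for its own Chern connection, $|\nabla S|_g\le 2S^{1/2}|\oNab\Psi|_g$, so at an interior maximum $x_0$ of $G$ the cross term is absorbed by the good term via Young's inequality,
\[
2|\langle\nabla\rho,\nabla S\rangle_g|\le \tfrac{c_0}{2}\rho\,|\oNab\Psi|_g^2+C\,\frac{|\nabla\rho|^2}{\rho}\,S\le \tfrac{c_0}{2}\rho\,|\oNab\Psi|_g^2+CS,
\]
using $|\nabla\rho|^2\le C\rho$. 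Combining this with $\Delta_g G\le0$, the two displayed inequalities, and $|\Delta_g\rho|\le C$, and discarding the nonnegative good term, I obtain $0\ge(Ac_1-C')S(x_0)-C$; choosing $A$ large forces $S(x_0)\le C$. Hence $G(x_0)\le C$ because $\mathrm{tr}_\delta g$ is bounded, and since $G\le A\,\mathrm{tr}_\delta g$ is also bounded on $\partial K'$ (where $\rho=0$), the maximum of $G$ over $\overline{K'}$ is at most $C$. Therefore $\sup_K S=\sup_K\rho S\le \max_{\overline{K'}}G\le C$ uniformly in $t$, which gives \eqref{great} after comparing $S$ with $|\nabla g|^2$.

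The main obstacle is the first differential inequality: one must check that the local Calabi computation retains the good term $c_0|\oNab\Psi|_g^2$ with a positive coefficient while all error terms stay uniformly controlled. This is exactly where Ricci-flatness does the essential work, by making $\log\det g$ pluriharmonic (so that \eqref{metric2} bounds all its derivatives uniformly) and by killing the Ricci term in the trace inequality; this is what lets every constant above be taken independent of $t$. The only remaining subtlety is the localization itself: the cross term $\langle\nabla\rho,\nabla S\rangle_g$ involves one more derivative of $g$ than we are trying to control, and it can be handled only by feeding it into $|\oNab\Psi|_g^2$, which is precisely why that good term must be kept in the Calabi inequality rather than discarded.
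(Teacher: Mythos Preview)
Your proposal is correct and follows essentially the same route as the paper: the local Calabi $C^3$ identity for $S=|T|_g^2$, combined with the Aubin--Yau inequality $\Delta_g(\mathrm{tr}_\delta g)\geq c\,S$ as a barrier, localized by a cutoff and closed by the maximum principle. Two small points of comparison. First, because $g$ is Ricci-flat and $\delta$ is flat, the Calabi computation here is an \emph{exact identity} $\Delta_g S=|\nabla_g T|_g^2+|\oNab_g T|_g^2$ with no source terms at all; your detour through bounding the derivatives of $\log\det g$ is unnecessary (those terms are identically zero), and consequently there is no $-C(1+S)$ error. Second, you should retain both good terms $|\nabla_g T|_g^2+|\oNab_g T|_g^2$, not just $|\oNab_g T|_g^2$: since $T$ is not real, $|\nabla S|_g$ is controlled by $S^{1/2}\bigl(|\nabla_g T|_g^2+|\oNab_g T|_g^2\bigr)^{1/2}$, so the Young absorption needs the full Hessian term. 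With these cosmetic corrections your argument coincides with the paper's.
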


\begin{proof}
Fix a slightly larger compact set $K'$ containing $K$ in its interior. On $K'$, by (\ref{metric2}),
\begin{equation}\label{assum}
C^{-1}\delta\leq g\leq C\delta
\end{equation}
for some generic constant $C = C_{K'} = C_{K}$ independent of $t$. Let $\psi \geq 0$ be a smooth cutoff function supported
in $K'$ with $\psi\equiv 1$ on $K$ such that
$|\nabla \psi|^2\leq C$ and $\Delta(\psi^2) \geq -C$, where the gradient and Laplacian are the ones associated with $\delta$.
Then, by \eqref{assum}, $|\nabla_g \psi|_g^2\leq C$ and $\Delta_g(\psi^2) \geq -C$.

Following Yau \cite{Ya}, we define
$$S=|\nabla g|_g^2=g^{i\ov{\ell}}  g^{j\ov{q}} g^{p\ov{k}} \de_i g_{j\ov{k}} \de_{\ov{\ell}}g_{p\ov{q}}.$$
The norm (but not the connection) is now the one associated with $g$, which makes no difference for the final estimate because of (\ref{assum}). If $T$ denotes the difference of the Christoffel symbols of $g$ and $\delta$, restricted to the $(1,0)$-tangent bundle, then $T$ is a
tensor and it is easy to see that $S=|T|^2_g$.

Using that $g$ is Ricci-flat and $\delta$ is flat, the version of the Calabi-Yau $C^3$ estimate in \cite{PSS} gives
$$\Delta_g S = |\nabla_g T|^2_g+|\oNab_g T|^2_g.$$
Notice here that $T$ is not a real-valued tensor.
We can then compute that
\begin{align*}
\Delta_g (\psi^2 S)\geq \psi^2(|\nabla_g T|^2_g+|\oNab_g T|^2_g)-C S-2|\langle \nabla_g \psi^2,\nabla_g S\rangle_g|
\geq -CS,
\end{align*}
where we have used that, by Young's inequality,
\begin{align*}
2|\langle \nabla_g \psi^2,\nabla_g S\rangle_g|
=4\psi|\langle \nabla_g \psi, \nabla_g |T|^2_g\rangle_g|\leq C \psi |\nabla_g|T|^2_g|_g
\leq \psi^2(|\nabla_g T|^2_g+|\oNab_g T|^2_g)+CS.
\end{align*}
On the other hand, the Aubin-Yau $C^2$ estimate, using again that $g$ is Ricci-flat and $\delta$ is flat, gives
$$\Delta_g \tr{\delta}{g}=\delta^{i\ov{\ell}} g^{j\ov{q}} g^{p\ov{k}} \de_i g_{j\ov{k}} \de_{\ov{\ell}}g_{p\ov{q}}\geq C^{-1}S,$$
using \eqref{assum}. Thus, if we pick $C'$ large enough depending on the value of $C$ up to here, then
\begin{equation*}
\Delta_g (\psi^2 S+C'\tr{\delta}{g})\geq 0.
\end{equation*}
Hence the maximum of $\psi^2 S+C'\tr{\delta}{g}$ in $K'$ is achieved on the boundary of $K'$, which implies that
$\sup_{K}S\leq \sup_{K'}(\psi^2 S+C'\tr{\delta}{g})\leq C'\sup_{\partial K'} \tr{\delta}{g} \leq C$ as required, using (\ref{assum}).
\end{proof}

Now \eqref{great} indeed implies \eqref{estimate} for $k=1$, again because $t \leq 1$; compare \cite[Lemma 4.5]{GTZ}.

\subsection{Higher order estimates}
To prove (\ref{estimate}) for $k \geq 2$, we use a standard bootstrap argument. Since $g$ is Ricci-flat K\"ahler, we have that $\partial_i\partial_{\ov{\jmath}}\log \det(g_{k\ov{\ell}})= 0$
for all $i, j \in \{1,\dots,n\}$. This implies that the component functions of $g$ satisfy the quasilinear elliptic system
\begin{equation}\label{poiss}
\Delta_g (g_{i\ov{\jmath}})= Q_{i\ov{\jmath}} = \sum g^{k\ov{q}}g^{p\ov{\ell}}\de_i g_{k\ov{\ell}}\de_{\ov{\jmath}}g_{p\ov{q}}.
\end{equation}
For the bootstrap we also require three suitably nested compact regions $K'' \supset K' \supset K$.

By (\ref{metric2}) and (\ref{great}), $\|Q_{i\ov{\jmath}}\|_{L^p(K'',\delta)} \leq C_{K}$ for all $p \geq 1$. Thus,
$\|g_{i\ov{\jmath}}\|_{W^{2,p}(K',\delta)}\leq C_{K,p}$ for $p > 1$ by $L^p$ regularity theory since the coefficient matrix of (\ref{poiss}) has bounded ellipticity and bounded modulus
of continuity by (\ref{metric2}), (\ref{great}). Then $\|g_{i\ov{\jmath}}\|_{C^{1,\alpha}(K',\delta)}\leq C_{K,\alpha}$ for all $\alpha \in (0,1)$ by Morrey's inequality.

Now whenever $g$ is bounded in $C^{k,\alpha}(K'',\delta)$ for some $k \geq 1$, then $Q$ is bounded in $C^{k-1,\alpha}(K'',\delta)$, so that $g$ is bounded in $C^{k+1,\alpha}(K',\delta)$ by interior Schauder theory, which can be used here because the coefficients of (\ref{poiss}) are trivially bounded in $C^{k-1,\alpha}(K'',\delta)$ by assumption. Thus, shrinking and relabeling $K''$ and $K'$ in each step, we inductively prove that $\|g\|_{C^{k}(K,\delta)} \leq C_{K,k}$ for all $k \geq 2$. Again these estimates imply (\ref{main}) for the corresponding values of $k$ because $t \leq 1$.

This completes the proof of Theorem \ref{main}.

 \end{document}